\newtheorem{theorem}{Theorem}[section]
\newtheorem{lemma}[theorem]{Lemma}
\newtheorem{proposition}[theorem]{Proposition}
\theoremstyle{definition}
\newtheorem{example}[theorem]{Example}
\theoremstyle{remark}
\newtheorem{remark}[theorem]{Remark}
\numberwithin{equation}{section}
\newcommand{\interior}[1]{%
  {\kern0pt#1}^{\mathrm{o}}%
}
\begin{document}
\setcounter{page}{1}

\color{darkgray}{
\noindent 

\centerline{}

\centerline{}

\newcommand{\pknote}[1]{\todo[inline, color=cyan]{PK:\ #1}}
\newcommand{\term}[1]{\emph{#1}}

\newcommand{\wwnote}[1]{\todo[inline, color=yellow]{WW:\ #1}}


\title[Generalization of Ceva theorem]{Generalization of Ceva theorem}

\author[Wojciech Wdowski]{Wojciech Wdowski}




\begin{abstract}
In this paper, we present a novel generalization of the classical Ceva theorem to arbitrarily dimensional simplexes. Our approach allows cevians to have any dimension (smaller than the dimension of the base simplex). Consequently, our result unifies other generalizations of the Ceva theorem obtained in recent years.
\noindent \\\textit{Keywords.} Geometry, simplex, Ceva's theorem 
\end{abstract} \maketitle

\section{Introduction}

\subsection{Ceva's Theorem}\label{Ceva's theorem}
Ceva's Theorem---first proved by Yusuf al-Mu'taman ibn Hud, a king of~Zaragoza---is a fundamental result in the classical $2$-dimensional geometry. Although we firmly believe that it is well known to the readers, we nonetheless recall it here to make the whole exposition cleaner.

\begin{theorem}[Ceva's Theorem]
  Let $\Delta P_0P_1P_2$ be a triangle, and let points $Q_0,Q_1,Q_2$ lie on the edges opposite to vertices $P_0,P_1,P_2$, respectively \textup(see Figure~\ref{fig:trojkat_punkty_Q}\textup). Then, simplices $P_iQ_{i}$ \textup(for $i\in\{0,1,2\}$\textup) intersect nonempty if and only if the following formula holds
  \[
  \frac{P_0Q_1}{Q_1P_2}\frac{P_2Q_0}{Q_0P_1}\frac{P_1Q_2}{Q_2P_0}=1.
  \]
\end{theorem}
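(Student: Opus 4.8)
The plan is to prove both implications by reducing the concurrency of the cevians to an identity among ratios of triangle areas, a device that localizes the problem and treats the ``if'' and ``only if'' directions almost symmetrically. Throughout I write $[\,\cdot\,]$ for the (unsigned) area of a triangle; since each $Q_i$ lies in the interior of the opposite edge, every ratio below is a ratio of positive lengths, so no signed-ratio bookkeeping is required.

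First I would establish a single-cevian lemma: if $X$ is any point of the open segment $P_1Q_1$, with $Q_1$ on $P_0P_2$, then
\[
  \frac{P_0Q_1}{Q_1P_2}=\frac{[P_0P_1X]}{[P_2P_1X]}.
\]
The proof is a two-line computation. The triangles $P_0P_1Q_1$ and $P_2P_1Q_1$ share the apex $P_1$ and have collinear bases, so the ratio of their areas equals $P_0Q_1/Q_1P_2$; the same common ratio governs the pair $P_0XQ_1$ and $P_2XQ_1$; and because $X\in P_1Q_1$ one has $[P_0P_1X]=[P_0P_1Q_1]-[P_0XQ_1]$ together with the analogous identity on the $P_2$ side, so the common ratio passes to the differences. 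Writing the analogous identities for the cevians through $Q_0$ and through $Q_2$ and multiplying all three, the six areas cancel in pairs—because $[P_iP_jX]=[P_jP_iX]$—and the product collapses to $1$. This settles the ``only if'' direction: if the three cevians share a common point $X$, the stated product equals $1$.

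For the converse I would invoke the standard uniqueness trick. Since $\Delta P_0P_1P_2$ is nondegenerate, the cevians $P_0Q_0$ and $P_1Q_1$ are not parallel and meet in a unique point $X$; let the line $P_2X$ cut the edge $P_0P_1$ at a point $Q_2'$. By construction $P_0Q_0$, $P_1Q_1$, $P_2Q_2'$ are concurrent at $X$, so the already-proved direction yields $\frac{P_0Q_1}{Q_1P_2}\frac{P_2Q_0}{Q_0P_1}\frac{P_1Q_2'}{Q_2'P_0}=1$. Comparing this with the hypothesis forces $\frac{P_1Q_2'}{Q_2'P_0}=\frac{P_1Q_2}{Q_2P_0}$, and since a point of a segment is determined by the ratio in which it divides that segment, $Q_2'=Q_2$. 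Hence $P_2Q_2$ also passes through $X$, and the three cevians intersect nonempty.

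The step I expect to be the genuine obstacle is not either computation but the care needed to justify that the ratio determines the dividing point (so that $Q_2'=Q_2$) and that the intersection $X$ of the first two cevians lies inside the triangle; both rest on the hypothesis that the $Q_i$ are interior to their edges, which keeps all ratios positive and rules out the parallel or external configurations that would otherwise break the unsigned statement. An alternative route that avoids areas altogether—and the one that generalizes most readily to higher-dimensional simplices—is to work in barycentric coordinates with respect to $P_0,P_1,P_2$: a point on $P_0P_2$ dividing it in ratio $P_0Q_1:Q_1P_2=\lambda:\mu$ has coordinates proportional to $(\mu,0,\lambda)$, concurrency of the three cevians becomes the vanishing of a single $3\times3$ determinant, and the product-equal-to-one condition falls out of expanding that determinant.
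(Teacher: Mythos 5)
Your proof is correct, but note that the paper does not actually prove this statement: Ceva's Theorem is recalled there as classical background (attributed to al-Mu'taman ibn Hud) and is then \emph{used} as an input---it is precisely the base case $n=2$ invoked in the induction proving Theorem~\ref{main}, and it reappears at the end only as the special case $n=2$, $k=1$ of Theorem~\ref{Main!}, a derivation that would be circular if offered as a proof. So your argument supplies something the paper deliberately omits. What you give is the standard self-contained treatment: the area-ratio lemma $\frac{P_0Q_1}{Q_1P_2}=\frac{[P_0P_1X]}{[P_2P_1X]}$ for a point $X$ on the cevian, the pairwise cancellation of the six unsigned areas for the ``only if'' direction, and the uniqueness-of-the-dividing-ratio trick for the converse; all steps are sound, and you correctly flag the two points needing care (that the first two cevians, as \emph{segments}, meet at an interior point, and that a ratio determines the foot $Q_2'=Q_2$), both of which follow from the interiority of the $Q_i$ exactly as you say. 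Your closing remark about barycentric coordinates is apt: that formulation, not the area method, is the one that scales to the paper's setting, since the proof of Theorem~\ref{main} runs entirely on uniqueness of barycentric representations of a common point $X$ with respect to different cevians. The only caveat worth recording is that your elementary proof is an addition to, not a variant of, anything proved in the paper.
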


Recall that the segment $P_iQ_{i}$ is called a \term{cevian}, and point $Q_{i}$ is called \term{cevian's foot} (for $i\in\{0,1,2\}$).
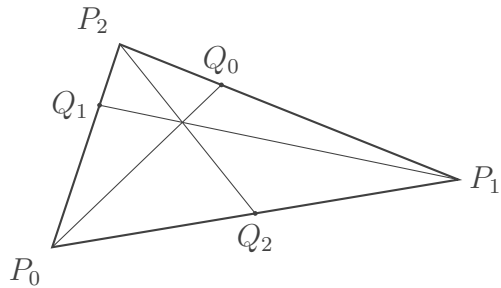
\begin{figure}\label{fig:trojkat_punkty_Q}
  \centering
  \begin{tikzpicture}[scale=0.9]
  \coordinate (P0) at (0,0);
  \coordinate (P1) at (6,1);
  \coordinate (P2) at (1,3);

  \node[below left] at (P0) {$P_0$};
  \node[right] at (P1) {$P_1$};
  \node[above left] at (P2) {$P_2$};
  
  \coordinate (Q0) at ($ (P1)!0.7!(P2) $);
  \coordinate (Q1) at ($ (P2)!0.3!(P0) $);
  \coordinate (Q2) at ($ (P0)!0.5!(P1) $);

  \fill (Q0) circle (1pt) node[above] {$Q_0$};
  \fill (Q1) circle (1pt) node[left] {$Q_1$};
  \fill (Q2) circle (1pt) node[below] {$Q_2$};

  \draw (P0) -- (Q0);
  \draw (P1) -- (Q1);
  \draw (P2) -- (Q2);

  \draw[thick] (P0) -- (P1) -- (P2) -- cycle;

  \end{tikzpicture}
  \caption{Demonstration of Ceva's Theorem.}
\end{figure}

This paper aims to extend the above result to simplices of arbitrary dimensions. Prior work on this topic has been conducted in \cite{Buba-Brzozowa, Prince, Samet, Witczynski}. In all four papers, the authors used an $n$-dimensional simplex as a (natural) generalization of a triangle. However, the notion of a cevian differs between the papers. Witczyński and Buba-Brzozowa (see \cite{Witczynski,Buba-Brzozowa}) use $(n-1)$-dimensional simplices to generalize the concept of a cevian. On the other hand, in Samet's paper \cite{Samet}, cevians are simply $1$-dimensional. These two interpretations exhaust the possibilities for defining cevians in three-dimensional space. However, in higher dimensions, there are other possibilities. For example, when the dimension equals~$4$, one may think of $2$-dimensional cevians and ask whether an analogous result holds.

Below, we present a result (see Theorem~\ref{Main!}) that generalizes the Ceva theorem to simplices of an arbitrary dimension $n\geq 2$ and cevians of dimension~$k$ for any $1 \leq k < n$. Our result encompasses and extends those of Witczyński, Buba-Brzozowa, and Samet. In fact, the above-mentioned theorems appear as border cases of our main result.

Similar results have already been obtained in \cite{Prince}, by means of algebraic geometry techniques. In contrast, the present work uses a much simpler method to obtain the same conclusions, while also provide a straightforward condition for every $k\in\{1,...,n\}$. 

For a complete understanding of this paper (besides a working knowledge of $n$-dimensional real space), we assume familiarity with the framework developed in \cite{Samet}.

\subsection{Simplices}\label{symplex}
Let $P_0P_1\dots P_n$ be points in a general position of an $n$-dimensional Euclidean space. We denote by $\mathbf{S} = \Delta P_0P_1\dotsc P_n$, the $n$-dimension simplex spanned by these points. The \term{interior} of the simplex $\mathbf{S}$, denoted~$\interior{\mathbf{S}}$, is the set of points of the form 
\[
\alpha_0P_0+\dotsb +\alpha_nP_n, 
\]
where $\sum^n_{k=0}\alpha_k=1$ and $\alpha_k>0$ for every~$k$.

Any subset of $k+1$ points from $\{P_0,P_1,\dots,P_n\}$ spans a $k$-dimensional \term{subsimplicial face} of~$\mathbf{S}$, which we refer to as a $k$-\term{face} (or simply \term{face}). In particular, when $k = n - 1$, the resulting $(n-1)$-face is called a \term{facet}. We denote by $\mathbf{S}_i$ the facet of~$\mathbf{S}$ opposite to the vertex~$P_i$, i.e., the facet that does not contain~$P_i$. 

\subsection{Construction of $k$-dimension cevians}\label{construction}
We begin by defining the notion of $k\mbox{-cevians}$. Assume that the simplex~$\mathbf{S}$ is fixed once and for all. Take a subset $U = \{u_1,\dotsc ,u_k\}$ of the set of indices $\{0,\dotsc ,n\}$, where $0\leq u_1<u_2<\dotsb <u_k\leq n$. Denote its completion by $U' = \{0,\dotsc ,n\}\setminus U$, and assume that $U'=\{v_1,\dotsc ,v_l\}$, where $0\leq v_1<v_2<\dotsb <v_{l}\leq n$ and $k+l=n+1$. Next, select a point $Q_{U'}$ in the interior of $\Delta P_{v_1}\dotsc P_{v_l}$. The simplex $\Delta Q_{U'}P_{u_1}\dotsc P_{u_k}$ will be called a \term{$k\mbox{-cevian}$} of $\mathbf{S}$. 

The point $Q_{U'}$ will be referred to as the \term{foot} of the $k$-cevian. In an $n\mbox{-dimensional}$ simplex, there are ${n+1 \choose k}$ possible choices for the set~$U$. Assume that for each such choice, we have selected a corresponding foot $Q_{U'}$. This way, we obtain a family of ${n+1 \choose k}$ $k\mbox{-cevians}$, which we denote by~$\mathfrak{R}$. 

Theremainder of the paper will be devoted to finding conditions on simplices that ensure that the family~$\mathfrak{R}$ has a nonempty intersection In what follows, the family~$\mathfrak{R}$ of $k\mbox{-cevians}$, like the simplex~$\mathbf{S}$, will be assumed to be fixed.


\subsection{\texorpdfstring{$l$-Faces}{l-Faces}}
An essential role in the subsequent discussion will be played by the $l$-dimensional faces of~$\mathbf{S}$, where $k + l = n + 1$. Accordingly, we will study the relationship between the $k$-cevians and the $l$-faces.

Let $\mathbf{L}$ be an $l$-face of the form  
\[
\mathbf{L}=\Delta P_{j_0}\dotsc P_{j_l},
\]
where $J=\{j_0,\dotsc ,j_l\}\subset\{0,1,...,n\}$. Let $J' = \{ i_1, \dotsc, i_{k-1} \} = \{ 0, \dotsc, n \} \setminus J$ denote the complement of $J$.

Observe that each \emph{facet} of~$\mathbf{L}$---that is, each $(l-1)$-dimensional face of $\mathbf{L}$---contains exactly one foot of a $k$-cevian from the family~$\mathfrak{R}$. Specifically, for each $t\in J$, the interior of the facet ${\mathbf{L}_t}$, spanned by $\{ P_j: j\in J, j\neq t\}$, contains the foot~$Q_{J\backslash\{t\}}$ of a $k$-cevian
\begin{equation} \label{form}
C_t = \Delta Q_{J\backslash\{t\}} P_t P_{i_1} \dotsc P_{i_{k-1}}.
\end{equation}
Moreover, every $k$-cevian whose foot lies in a facet of~$\mathbf{L}$ is of the form \eqref{form}. Thus, the subfamily of $\mathfrak{R}$ consisting of $k$-cevians whose feet lie in~$\mathbf{L}$ has precisely $l+1$ elements. We denote this subfamily by~$\mathfrak{R}_\mathbf{L}$.


Throughout the rest of the paper, while analyzing the cevian feet within a given $l$-face, we shall adopt the notation $Q_{[t]}$ in place of $Q_{J\backslash\{t\}}$.


\subsection{Induced cevians}
Given an $l$-face~$\mathbf{L}$ of~$\mathbf{S}$ and a $k$-cevian $C_t\in\mathfrak{R}_\mathbf{L}$ with foot~$Q_{[t]}$ in the interior of~$\mathbf{L}$ we define the \term{induced cevian}~$c_t$ in~$\mathbf{L}$ as the intersection 
\[
c_t = C_t\cap\mathbf{L} = \Delta Q_{[t]}P_t.
\]
In this way, we obtain a family of $1$-cevians in~$\mathbf{L}$, defined as in~\cite{Samet}, which we denote by $\mathfrak{r}_\mathbf{L}$. 

Before proceeding further, we present two examples that illustrate the notions introduced above and place them in the proper context with respect to prior work.
  
\begin{example}[Case $n=3\text{ and }k=2$]
Let $\mathbf{S} = \Delta P_0P_1P_2P_3$ be a simplex. Construct a $2$-cevian in~$\mathbf{S}$. To this end take a set $U=\{2,3\}$ (so $U'=\{0,1\}$), and select a point $Q_{U'}\in\interior{\Delta P_0P_1}$. Then the simplex $C = \Delta Q_{U'}P_2P_3$ is a $2\mbox{-cevian}$ (see Figure~\ref{n=3,k=2}).

Since $k=2$, we have that $l=2$. Therefore, $\mathbf{L}=\Delta P_0P_1P_3$ is a $2\mbox{-face}$ of $\mathbf{S}$. Then $Q_{U'}=Q_{[3]}$,
\[
C=C_3=\Delta Q_{[3]}P_3P_2
\]
and the induced cevian~$c_3$ in~$\mathbf{L}$ has the form $c_3=\Delta Q_{[3]}P_3$. It is a line segment, and so a cevian in the most classical sense.

Moreover, as $k=n-1$, the $2$-cevian~$C$ is a cevian in the sense of \cite{Witczynski} and \cite{Buba-Brzozowa}.
\end{example}

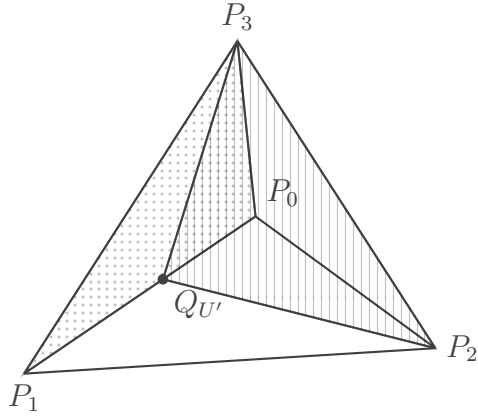
\begin{figure}  
\centering  
\tdplotsetmaincoords{70}{110} 
\begin{tikzpicture}[tdplot_main_coords, scale=4]  

\coordinate (P0) at (-0.5,0.2,0);  
\coordinate (P1) at (1.2,0,0);  
\coordinate (P2) at (0.5,1.2,0);  
\coordinate (P3) at (0.5,0.5,1);  

\draw[thick] (P0) -- (P1);  
\draw[thick] (P0) -- (P2);  
\draw[thick] (P1) -- (P2);  
\draw[thick] (P0) -- (P3);  
\draw[thick] (P1) -- (P3);  
\draw[thick] (P2) -- (P3);  

\filldraw[opacity=0.5, thick, pattern=dots] (P0) -- (P1) -- (P3) -- cycle;  

\node at (P0) [above right] {$P_0$};  
\node at (P1) [below] {$P_1$};  
\node at (P2) [right] {$P_2$};  
\node at (P3) [above] {$P_3$};  

\path (P0) -- (P1) coordinate[pos=0.4] (QUprime);  
\node[below right] at (QUprime) {$Q_{U'}$}; 

\draw[thick] (QUprime) -- (P2);  
\draw[thick] (QUprime) -- (P3);  

\filldraw[opacity=0.5, thick, pattern=vertical lines] (P2) -- (P3) -- (QUprime) -- cycle;  
\fill (QUprime) circle (0.5pt); 
\end{tikzpicture}  
\caption{\label{n=3,k=2}Case $n=3, k=2$ with the $2\mbox{-cevian } C$ is dashed and the $2\mbox{-face } \mathbf{L}$ is dotted.}  
\end{figure}


\begin{example}[Case $n=4$, $k=2$]
Now consider a $4$-dimensional simplex $\mathbf{S} = \Delta P_0P_1P_2P_3P_4$. We again construct a $2$-cevian in $\mathbf{S}$. Let $U=\{2,4\}$, so its complement is $U' = \{0,1,3\}$. Choose a point $Q_{U'}$ in the interior of the subsimplex $\Delta P_0P_1P_3$. Then the simplex $C=\Delta Q_{U'}P_2P_4$ (see Figure~\ref{n=4,k=2}) is a $2$-cevian in~$\mathbf{S}$.

In this case, $k=2$ and $l=3$, so $\mathbf{L}=\Delta P_0P_1P_2P_3$ is a $3$-dimensional face of~$\mathbf{S}$. Then $Q_{U'}=Q_{[2]}$,
\[
    C=C_2=\Delta Q_{[2]}P_2P_4
\]
and the induced cevian~$c_2$ is of the form  
\[
c_2 = \Delta Q_{[2]} P_2. 
\]

It should be emphasized that the cevian $C$ is neither a line segment nor a simplex of codimension one. To the best of the author's knowledge, such cases have not been treated in previous literature.
\end{example}

\begin{figure}
\centering
\tdplotsetmaincoords{70}{110} 
  \begin{tikzpicture}[tdplot_main_coords, scale=4]

      \coordinate (P0) at (-0.5,0.2,0);
      \coordinate (P1) at (1.2,0,0);
      \coordinate (P2) at (0.5,1.2,0);
      \coordinate (P3) at (0.5,0.5,1);
    
      \coordinate (P4) at (-1, 1, 0.5);
      \node at (P4) [above left] {$P_4$};

        \fill[black] (P4) circle (0.4pt); 
        
      \draw[thick] (P0) -- (P1);
      \draw[thick] (P0) -- (P2);
      \draw[thick] (P1) -- (P2);
      \draw[thick] (P0) -- (P3);
      \draw[thick] (P1) -- (P3);
      \draw[thick] (P2) -- (P3);
    
      \node at (P0) [above right] {$P_0$};
      \node at (P1) [below] {$P_1$};
      \node at (P2) [right] {$P_2$};
      \node at (P3) [above] {$P_3$};

    \draw[dashed, thick] (P4) -- (P0);
    \draw[dashed, thick] (P4) -- (P2);
    \draw[dashed, thick] (P4) -- (P3);

    \filldraw[opacity=0.2, thick, pattern=dots] (P0) -- (P1) -- (P2) -- cycle; 
    \filldraw[opacity=0.2, thick, pattern=dots] (P0) -- (P1) -- (P3) -- cycle; 
    \filldraw[opacity=0.2, thick, pattern=dots] (P0) -- (P2) -- (P3) -- cycle; 
    \filldraw[opacity=0.2, thick, pattern=dots] (P1) -- (P2) -- (P3) -- cycle; 
    
    \draw[dashed, thick] (P4) .. controls (-0.5, 2, 0.4)  and (0.5, 1.5, -0.6) .. (P1);

    \coordinate (QUprime) at ($(P0)!0.33!(P1)!0.3!(P3)$); 
    \fill (QUprime) circle (0.5pt); 
    \node[below] at (QUprime) {$Q_{U'}$}; 

    \draw[black, thick] (P2) -- (P4) -- (QUprime) -- (P2);
    \filldraw[black, opacity=0.5, thick, pattern=vertical lines] (P2) -- (P4) -- (QUprime) -- cycle; 

  \end{tikzpicture}
  \caption{\label{n=4,k=2} Visualization for Case $n=4, k=2$. The $2\mbox{-cevian } C$ is dashed, and the $3\mbox{-face } \mathbf{L}$ is dotted.} 
\end{figure}
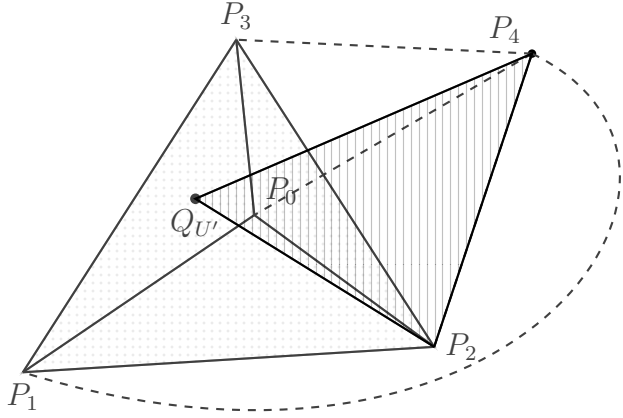

\subsection{Multipedes} 
Finally, we recall the notion of a \term{multipede} and an \term{induced multipede}, introduced originally by Samet in~\cite{Samet}. Let~$\mathbf{T}$ be any $m~\mbox{- dimesional}$ simplex spanned by points from $\{P_0,P_1,...,P_m\}$ (in applications, it will be just a face of~$\mathbf{S}$). Denote by~$\mathcal{F}$ the collection of all faces of~$\mathbf{T}$. A set of points $\{A_{\mathbf{F}}\}_{\mathbf{F}\in \mathcal{F}}$ is called a \term{multipede} if every point $A_\mathbf{F}$ lies in the interior of the corresponding face~$\mathbf{F}$.

We say that a point $A_\mathbf{T}$ \term{induces} a multipede in a simplex~$\mathbf{T}$ if points $A_\mathbf{T}, A_{\mathbf{T}_i}, P_i$ are collinear. Moreover, for every $1\mbox{-face}$ $\mathbf{F}\in\mathcal{F}$ (where $t> 1$) points $A_\mathbf{F}, A_{\mathbf{F}_i}, P_i$ are collinear. Samet proved that each point in the interior of a given simplex induces exactly one multipede. 
Let $\mathcal{M} = \{A_\mathbf{F}\}_{\mathbf{F}\in \mathcal{F}}$ be a multipede of~$\mathbf{T}$ induced by some point~$Q$. Fix a face~$\mathbf{F}\in \mathcal{F}$ of~$\mathbf{T}$ and let $P := A_\mathbf{F}$ be a point of the multipede~$\mathcal{M}$ in the interior of~$\mathbf{F}$. Then the multipede~$\mathcal{N}$ of~$\mathbf{F}$ induced by~$P$ is a subset of~$\mathcal{M}$. This inclusion let us define a relation $\preceq$ on the set of points of~$\mathbf{T}$. We say that $P\preceq Q$ if $P$ is one of the points of the multipede induced by~$Q$ (i.e., if the multipede induced by~$P$ is a subset of the multipede induced by~$Q$). This relation is not well-defined on the vertices of~$\mathbf{T}$. To remedy this, for any vertex~$P_i$ and a point~$Q$ in the interior of any face containing~$P_i$, we set $P_i\preceq Q$.
  
\section{Main results}

\begin{theorem}\label{main}
  Let $\mathfrak{R}$ be family of $k\mbox{-cevians}$ in $\mathbf{S}$.
  The following conditions are equivalent:
  \begin{enumerate}
  \item \label{1} $\underset{X\in\interior{S}}{\exists}X\in\bigcap\mathfrak{R}$ 
  \item \label{2} For every $l\mbox{-face}$ $\mathbf{L}$ we have $\bigcap\mathfrak{r}_\mathbf{L}\neq\emptyset$.
  \end{enumerate}
\end{theorem}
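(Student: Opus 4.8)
The plan is to prove the two implications separately: $(\ref{1})\Rightarrow(\ref{2})$ via a join/projection correspondence, and $(\ref{2})\Rightarrow(\ref{1})$ via a gluing argument for a system of linear constraints.

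For $(\ref{1})\Rightarrow(\ref{2})$, fix an $l$-face $\mathbf{L}=\Delta P_{j_0}\dots P_{j_l}$ with index set $J$ and complement $J'=\{i_1,\dots,i_{k-1}\}$, and set $F=\Delta P_{i_1}\dots P_{i_{k-1}}$. Since $J$ and $J'$ partition $\{0,\dots,n\}$, the simplex $\mathbf{S}$ is the join $\mathbf{L}\ast F$, so each $X\in\interior{\mathbf{S}}$ with barycentrics $X=\sum_m\alpha_m P_m$ has a well-defined projection $\pi_\mathbf{L}(X)=\big(\sum_{j\in J}\alpha_j\big)^{-1}\sum_{j\in J}\alpha_j P_j\in\interior{\mathbf{L}}$, and each cevian of $\mathfrak{R}_\mathbf{L}$ decomposes as $C_t=c_t\ast F$. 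I would compare the barycentric description of a point of $C_t$ with that of its projection; this routine computation yields
\[
X\in C_t\iff \pi_\mathbf{L}(X)\in c_t,\qquad t\in J.
\]
Hence if $X\in\bigcap\mathfrak{R}$ then $X\in C_t$ for all $t\in J$, so $\pi_\mathbf{L}(X)\in\bigcap\mathfrak{r}_\mathbf{L}$, proving that the latter is nonempty for every $\mathbf{L}$.

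For $(\ref{2})\Rightarrow(\ref{1})$, I would first recast everything in barycentric coordinates. A point $X=\sum_m\alpha_m P_m\in\interior{\mathbf{S}}$ lies in the cevian with foot $Q_{U'}=\sum_{v\in U'}q_v P_v$ precisely when the subvector $(\alpha_v)_{v\in U'}$ is proportional to $(q_v)_{v\in U'}$. Setting $\xi_m=\log\alpha_m$ and $D^{U'}_{v,v'}=\log q_v-\log q_{v'}$, this becomes the linear difference system $\xi_v-\xi_{v'}=D^{U'}_{v,v'}$ for $v,v'\in U'$, with one block for each $l$-subset $U'\subseteq\{0,\dots,n\}$ (equivalently, each cevian). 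Condition $(\ref{1})$ is then the solvability of the entire system by a single potential $(\xi_m)$, from which $X$ is recovered by $\alpha_m=e^{\xi_m}$ after normalizing. Translated the same way, condition $(\ref{2})$ says exactly that for each $(l+1)$-subset $J$ the subsystem coming from the blocks $J\setminus\{t\}$, $t\in J$, admits a local potential $(\zeta_j)_{j\in J}$.

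It remains to glue the local potentials into a global one. First I would show that the prescribed difference $D^{U'}_{v,v'}$ for a fixed pair $\{v,v'\}$ does not depend on the $l$-set $U'$ containing it: any two such $l$-sets are connected by a chain of $l$-sets, each containing $\{v,v'\}$ and differing from the next in one index (connectivity of the relevant Johnson graph), and adjacent members lie in a common $(l+1)$-set $J$, so the local potential from $(\ref{2})$ forces their differences to agree. This yields a single well-defined $D_{v,v'}$ for every pair. The system $\xi_v-\xi_{v'}=D_{v,v'}$ on the complete graph with vertices $\{0,\dots,n\}$ is solvable iff the cocycle (triangle) condition $D_{v,v'}+D_{v',v''}+D_{v'',v}=0$ holds for all triples, since triangles generate the cycle space. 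For $l\geq 3$ each triple lies in a single $l$-set $U'$, where the identity is the telescoping of the log-coordinates of the one foot $Q_{U'}$; for the base case $l=2$ the triple spans a $2$-face and the triangle condition is precisely the classical Ceva equality for the three induced $1$-cevians, which holds by $(\ref{2})$. A global potential then exists, completing the implication.

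The hard part will be this gluing step, since $(\ref{2})$ only guarantees consistency face by face and must be propagated to one global interior point. The two decisive ingredients are the well-definedness of the pairwise differences — where the overlaps of faces, encoded by Johnson-graph connectivity, are exactly compensated by the local potentials — and the reduction of global solvability to the triangle cocycle condition, whose base instance is honest Ceva. Checking that no higher consistency conditions arise (i.e. that triangles suffice) is what keeps the argument finite and is the crux of making the generalization independent of the dimension.
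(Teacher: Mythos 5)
Your proposal is correct; the interesting divergence from the paper is in the implication \eqref{2}$\,\Rightarrow\,$\eqref{1}. Your forward implication is essentially the paper's own argument: the paper, too, compares the barycentric expansions of a common point $X$ in two cevians $C_t,C_s\in\mathfrak{R}_\mathbf{L}$, observes that the coefficients along $P_{l+1},\dotsc,P_n$ coincide, and extracts the normalized point $\hat X\in\Delta Q_{[t]}P_t\cap\Delta Q_{[s]}P_s$; your projection $\pi_{\mathbf{L}}$ is the same computation packaged via the join decomposition $\mathbf{S}=\mathbf{L}\ast F$, $C_t=c_t\ast F$. For the converse, however, the paper proceeds by downward induction on the cevian dimension $k$: the cases $n\in\{2,3\}$ and the base case $k=n-1$ are imported from classical Ceva and from \cite{Buba-Brzozowa,Witczynski,Samet}, and the inductive step uses Samet's multipede relation $\preceq$ (through Remarks~\ref{important} and~\ref{important2}) to manufacture a family of $k$-cevians from the given $(k-1)$-cevians, obtain a common point from the inductive hypothesis, and verify by a barycentric computation that this point lies in every original $(k-1)$-cevian. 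Your converse is instead a self-contained gluing argument: membership in a cevian becomes proportionality of a barycentric subvector, condition \eqref{2} becomes existence of local potentials, the pairwise differences $D_{v,v'}$ are well defined by Johnson-graph connectivity, and global solvability reduces to the triangle cocycle condition, which telescopes inside a single foot when $l\geq 3$ and is the classical Ceva relation (equivalently, the local potential itself) when $l=2$. The paper's route buys the advertised unification---the theorem literally reduces to the border cases in the literature---at the cost of depending on those external results and on the loosely stated remarks; your route buys independence and effectivity: no induction on dimension, no prior generalizations needed (even the $l=2$ step follows from telescoping the local potential rather than quoting Ceva), and the common point is produced explicitly as $\alpha_m=e^{\xi_m}$ after normalization, so your argument in fact re-proves the cited special cases rather than assuming them. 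Two routine details you should still write out: the point of $\bigcap\mathfrak{r}_\mathbf{L}$ guaranteed by \eqref{2} lies automatically in $\interior{\mathbf{L}}$ (a vanishing barycentric coordinate would force it to be a vertex, which is impossible because the feet are interior to the facets and $l\geq 2$), so the logarithms $\zeta_j$ exist; and since $l\geq 2$ every pair of indices, and for $l\geq 3$ every triple, lies in some $l$-set, which is what makes $D_{v,v'}$ total and the telescoping available.
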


The proof of this theorem must be preceded by a lemma.

\begin{lemma}
Fix an $l$-face $\mathbf{L}$ and let $Q_{[i]}$ and $Q_{[j]}$ be feet of $k$-cevians lying in the interiors of the facets $\mathbf{L}_i$ and $\mathbf{L}_j$, respectively. Further, let $A$ and $B$ be points such that $A\preceq Q_{[i]}$, $B\preceq Q_{[j]}$ and $A,B\in\interior{\mathbf{F}}\subset\mathbf{L}$. Then condition~\eqref{2} of Theorem~\ref{main} implies that $A=B$.
\end{lemma}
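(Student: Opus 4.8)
The plan is to extract from condition~\eqref{2} a single interior point of $\mathbf{L}$, pass to the multipede it induces, identify the cevian feet with the facet-points of that multipede, and then recognize both $A$ and $B$ as the \emph{same} point of that multipede.

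First, I would apply condition~\eqref{2} to the fixed $l$-face $\mathbf{L}$, which gives a common point $X\in\bigcap\mathfrak{r}_\mathbf{L}$ of the induced $1$-cevians $c_t=\Delta Q_{[t]}P_t$, $t\in J$. A short barycentric check confirms $X\in\interior{\mathbf{L}}$: were some coordinate $x_t$ of $X$ to vanish, then $X$ would lie on the facet $\mathbf{L}_t$, and as $X\in[P_t,Q_{[t]}]$ this forces $X=Q_{[t]}$; but $Q_{[t]}$ has vanishing $P_t$-coordinate and strictly positive coordinates on all other vertices, which is incompatible with lying on a second segment $[P_s,Q_{[s]}]$, $s\neq t$. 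Hence $X$ is interior and, by Samet's theorem, induces a unique multipede $\mathcal{M}=\{A_\mathbf{G}\}_{\mathbf{G}\in\mathcal{F}}$ of $\mathbf{L}$ with $A_\mathbf{L}=X$.

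The decisive step is to show $A_{\mathbf{L}_t}=Q_{[t]}$ for every $t\in J$, i.e.\ that the facet-points of $\mathcal{M}$ are exactly the cevian feet. The defining property of the induced multipede makes $X,A_{\mathbf{L}_t},P_t$ collinear, so $A_{\mathbf{L}_t}$ is the point where the line $P_tX$ meets $\mathbf{L}_t$; but $X\in c_t=[P_t,Q_{[t]}]$ makes $X,Q_{[t]},P_t$ collinear too, and $Q_{[t]}\in\mathbf{L}_t$. Since a line from a vertex through an interior point meets the opposite facet in a single point, $A_{\mathbf{L}_t}=Q_{[t]}$. I expect this identification to be the main obstacle, as it is the one place where the geometric information packaged in condition~\eqref{2} has to be translated into the purely combinatorial multipede structure; the remaining steps are formal manipulations of the relation $\preceq$.

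Finally, since $Q_{[i]}=A_{\mathbf{L}_i}$ is a point of $\mathcal{M}$ lying in $\interior{\mathbf{L}_i}$, the inclusion property recalled above shows that the multipede of $\mathbf{L}_i$ induced by $Q_{[i]}$ is a subset of~$\mathcal{M}$. By the definition of $\preceq$, the hypothesis $A\preceq Q_{[i]}$ places $A$ in that sub-multipede, whence $A\in\mathcal{M}$; and because a point lies in the interior of a unique face and each face interior carries exactly one point of $\mathcal{M}$, the condition $A\in\interior{\mathbf{F}}$ forces $A=A_\mathbf{F}$. Running the same argument with $Q_{[j]}=A_{\mathbf{L}_j}$ and $B\preceq Q_{[j]}$ yields $B=A_\mathbf{F}$, so $A=A_\mathbf{F}=B$. (When $\mathbf{F}$ is a single vertex the claim is immediate, as $\interior{\mathbf{F}}$ is then a single point.)
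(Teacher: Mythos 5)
Your proof is correct and takes essentially the same route as the paper's: extract a common point $X\in\bigcap\mathfrak{r}_\mathbf{L}$ from condition~\eqref{2}, establish $Q_{[i]}\preceq X$ and $Q_{[j]}\preceq X$, deduce $A\preceq X$ and $B\preceq X$ by the inclusion of multipedes, and conclude $A=B$ from the fact that the multipede induced by $X$ has exactly one point in $\interior{\mathbf{F}}$ (Samet's Proposition~1). The only difference is that you spell out details the paper leaves implicit, namely the interiority of $X$ and the collinearity argument identifying the cevian feet with the facet points of the multipede induced by~$X$.
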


\begin{proof}
 Let $X\in\bigcap\mathfrak{r}_\mathbf{L}$ then $Q_{[i]}\preceq X$ and $Q_{[j]}\preceq X$ so $A\preceq X$ and $B\preceq X$ since $A$ and $B$ are both in $\interior{\mathbf{F}}$, by \cite[Proposition~1]{Samet} it follows that $A=B$.
\end{proof}


\begin{remark}\label{important}
 Let $W$ be the set of all points generated by all $k\mbox{-cevians}$ feet. If condition~\eqref{2} holds, then for each $t\mbox{-face}$ (where $1\leq t\leq l-1$) $\mathbf{T}$ we get
 \[
    |W\cup \interior{\mathbf{T}}|=1
 \]
\end{remark}

\begin{remark}\label{important2}
  Condition~\eqref{2} is equivalent to the following one
\begin{itemize}
\item For every $l$-dimensional face $L$, there exists a point~$X$ such that $$Q_{[t]}\preceq X$$ for all $t$, where $Q_{[t]}$ is foot of a $k\mbox{-cevian}$ $C_t\in \mathfrak{R}_\mathbf{L}$ (i.e., $Q_{[t]}$ is a foot of induced cevian $c_t\in \mathfrak{r}_\mathbf{L}$).
\end{itemize}
\end{remark}

We are now in a position to prove the main theorem.

\begin{proof}[Proof of Theorem~\ref{main}]
Suppose that condition~\eqref{1} holds. Without loss of generality, we may assume that~$\mathbf{L}$ has the form $\mathbf{L}=\Delta P_0...P_l$. Then, any cevian~$C_t$ in $\mathfrak{R}_\mathbf{L}$ can be expressed as 
\[
C_t=\Delta Q_{[t]}P_t P_{l+1}...P_{n},
\]
where $t\in \{0,...,l\}$. Take a point $X\in \bigcap\mathfrak{R}$, then $X\in \interior{\mathbf{S}}$ and so $X$ is convex combination of the points $P_0P_1...P_n$ with weights $\alpha_0,...,\alpha_n$, where $\alpha_m>0$ and $\sum_{m=1}^n a_m = 1$. Consequently, 
\[
X=\alpha_0 P_0+\alpha_1P_1+ \dotsb +\alpha_{n}P_n.
\]
On the other hand, $X\in C_t$ and $X\in C_s$ for $s,t\in\{0,...,l\}$. It follows that $X$ has the form
\begin{equation}\label{!1}
\begin{cases}
  X &= \beta_1Q_{[t]}+\beta_2P_t+\alpha_{l+1}P_{l+1}+ \dotsb +\alpha_{n}P_n\\
  X &= \gamma_1Q_{[s]}+\gamma_2P_s+\alpha_{l+1}P_{l+1}+ \dotsb +\alpha_{n}P_n.
\end{cases}
\end{equation}
Notice that the uniqueness of barycentric coordinates implies that all the coefficients that correspond to the points $P_{l+1}, \dotsc, P_{n}$ are the same in all three representations. 

Set $\hat X:=\frac{\beta_1}{\beta_1+\beta_2}Q_{[t]}+\frac{\beta_2}{\beta_1+\beta_2}P_t$. It is clear that $\hat X\in\Delta Q_{[t]}P_t$. We claim that $\hat X\in \Delta Q_{[s]}P_s$. 
Denote $K := \sum_{m=l+1}^n\alpha_mP_m$. Then \eqref{!1} yields
\[
  (\beta_1+\beta_2)\cdot\Bigl(\frac{\beta_1}{\beta_1+\beta_2}Q_{[t]}+\frac{\beta_2}{\beta_1+\beta_2}P_t\Bigr)+K
  =
  (\gamma_1+\gamma_2)\cdot \Bigl(\frac{\gamma_1}{\gamma_1+\gamma_2} Q_{[s]}+\frac{\gamma_2}{\gamma_1+\gamma_2}P_s\Bigr)+K.
\]
Now, $\beta_1+\beta_2=1-\alpha_{l+1}+ \dotsb +\alpha_{n}=\gamma_1+\gamma_2$ and so we have
\[
  (\beta_1+\beta_2)(\frac{\beta_1}{\beta_1+\beta_2}Q_{[t]}+\frac{\beta_2}{\beta_1+\beta_2}P_t)+K
  =({\beta_1+\beta_2})(\frac{\gamma_1}{\gamma_1+\gamma_2} Q_{[s]}+\frac{\gamma_2}{\gamma_1+\gamma_2}P_s)+K.
\]
This implies that
\[
  \hat X=\frac{\gamma_1}{\gamma_1+\gamma_2} Q_{[s]}+\frac{\gamma_2}{\gamma_1+\gamma_2}P_s.
\]
Consequently, $\hat{X}\in\Delta Q_{[s]}P_s$ as claimed. This proves that $\bigcap\mathfrak{r}_\mathbf{L}$ is not empty.

Conversely, assume that condition~\eqref{2} holds. If the dimension~$n$ of the simplex~$\mathbf{S}$ is either $2$ or $3$, then the implication $\eqref{2} \Rightarrow \eqref{1}$ is already known. Specifically, it follows from the classical Ceva’s theorem when $n = 2$, and from its generalizations in the case $n = 3$: either from \cite{Buba-Brzozowa, Witczynski} when $k = 2$, or from \cite{Samet} when $k = 1$. Therefore ,we may assume that $n \geq 4$. The proof proceeds by induction on the dimension $k$ of the cevians involved. The base case $k = n-1$ is established in \cite{Buba-Brzozowa}. Assuming the result holds for some $1 < k < n$, we aim to show that it also holds for $k - 1$.
  
Let $\mathfrak{R}^{k-1}$ be a family of $(k-1)$-cevians in $\mathbf{S}$ satisfying~\eqref{2}. We shall construct a family~$\mathfrak{R}$ consisting of $k\mbox{-cevians}$. By Remark~\ref{important} there are precisely ${n+1 \choose l}={n+1 \choose n+1-k}={n+1 \choose k}$ points generated by feet of $(k-1)\mbox{-cevians}$ in the interiors of each $(l-1)\mbox{-face}$.  Let $Q_{U'}$ lie in the interior of an $(l-1)\mbox{-face}$ $\mathbf{F}$ of the form 
\[
\mathbf{F}=\Delta P_{v_1}...P_{v_l},
\]
where $0\leq v_1<v_2< \dotsb <v_{l}\leq n$. Take $U= \{0,\dotsc ,n\}\setminus\{v_1,v_2,\dotsc, v_{l}\}$, say $U=\{u_{1},...,u_k\}$ and $0\leq u_1<u_2< \dotsb <u_{k}\leq n$. Then the simplex 
\[
C=\Delta Q_{U'}P_{u_1}...P_{u_k}
\]
is a $k\mbox{-cevian}$. Repeating this procedure for different points $Q_{U'}$ we obtain a family~$\mathfrak{R}$ consisting of  $n+1 \choose k$ $k\mbox{-cevians}$. It follows from Remark~\ref{important2}, that $\mathfrak{R}$, satisfies~\eqref{2}. Hence, by the inductive hypothesis, there exists a point $X\in \mathfrak{R}$. 
  
To conclude the proof, we need to show $X$ sits in~$\mathfrak{R}^{k-1}$. To this end, take a  $(k-1)$-cevian $\hat{C}=\Delta\hat{Q}P_{i_1}\dotsc P_{i_{k-1}}$, where $\hat{Q}\in\interior{\mathbf{L}}$ is a foot of $\hat{C}$ and $\mathbf{L}={\Delta P_{j_0}\dotsc P_{j_l}}$. Every point in $\hat{C}$ can be expressed as a convex combination 
\[
\hat \omega \hat Q+\omega_1P_{j_1}+ \dotsb +\omega_{k-1}P_{k-1}.
\]
We claim that the point~$X$ also has this form. Since $X\in\interior{\mathbf{S}}$, it~follows that it can be expressed as
\[
X=\alpha_0P_0+\alpha_1P_1+ \dotsb +\alpha_nP_n,
\]
where $\alpha_m>0$ and $\sum^n_{m=0}a_m=1$. Moreover, the fact that $X$ sits in $\mathfrak{R}$ implies that $X\in C_t$, where 
\[
C_t=Q_{[t]}P_tP_{i_1}\dotsc P_{i_{k-1}},
\]
for some $Q_{[t]}\in\mathbf{L}_t$ and $t\in\{j_0,\dotsc ,j_l\}$. Consequently, we have 
\[
X=\beta_1Q_{[t]}+\beta_2P_t+\alpha_{i_1}P_{i_{1}}+\dotsb +\alpha_{i_{k-1}}P_{i_{k-1}}.
\]
Thus, $\hat{Q}=\frac{\beta_1}{\beta_1+\beta_2} Q_{[t]}+\frac{\beta_2}{\beta_1+\beta_2}P_t$ and so putting $\hat \omega=\beta_1+\beta_2$ and $\omega_m=\alpha_{i_{m}}$, we obtain $X\in \hat{C}$. This concludes the proof.
\end{proof}

The preceding result can now be reformulated in a manner analogous to the classical Ceva’s theorem.

\begin{theorem} \label{Main!}
Let $\mathfrak{R}$ be a family of $k\mbox{-cevians}$ in $\mathbf{S}$. The following conditions are equivalent:
  \begin{enumerate}
  \item \label{1!} The family $\mathfrak{R}$ intersects nonempty.
  \item \label{2!} For every $l\mbox{-face}$ $\mathbf{L}$ and every uniform or mixed fan $\{(F_z,L_z,M_z)| z\in\mathbb{Z}/m\mathbb{Z}\}$ one has
  $$\prod_{z\in\mathbb{Z}/m\mathbb{Z}} \frac{\mbox{Vol}(L_z)} {\mbox{Vol}(M_z)}=1$$
  \end{enumerate}
\end{theorem}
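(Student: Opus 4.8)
The plan is to obtain Theorem~\ref{Main!} as a reformulation of Theorem~\ref{main}, trading the abstract intersection condition on the induced cevians for Samet's concrete volume-ratio criterion applied one face at a time. Condition~\eqref{1!} coincides with condition~\eqref{1} of Theorem~\ref{main}, so the equivalence \eqref{1!}$\Leftrightarrow$\eqref{2} is already in hand; it therefore suffices to establish \eqref{2}$\Leftrightarrow$\eqref{2!}. In fact it suffices to prove this separately for each fixed $l$-face, since both conditions are universally quantified over all such faces.

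First I would fix an arbitrary $l$-face $\mathbf{L}=\Delta P_{j_0}\dotsc P_{j_l}$ and recall that its induced family $\mathfrak{r}_\mathbf{L}$ consists of the $1$-cevians $c_t=\Delta Q_{[t]}P_t$, one for each $t\in J=\{j_0,\dotsc,j_l\}$. As already observed, $\mathfrak{R}_\mathbf{L}$ has exactly $l+1$ elements, so $\mathfrak{r}_\mathbf{L}$ is a family of $l+1$ classical $1$-cevians in the $l$-dimensional simplex $\mathbf{L}$, with precisely one foot $Q_{[t]}$ sitting in the relative interior of each facet $\mathbf{L}_t$. This is exactly the data to which Samet's generalization of Ceva's theorem applies: an $l$-simplex carrying one cevian foot per facet.

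Next I would invoke Samet's criterion verbatim on the free-standing simplex $\mathbf{L}$. By \cite{Samet}, such a family of $1$-cevians satisfies $\bigcap\mathfrak{r}_\mathbf{L}\neq\emptyset$ if and only if, for every uniform or mixed fan $\{(F_z,L_z,M_z)\mid z\in\mathbb{Z}/m\mathbb{Z}\}$ of $\mathbf{L}$, the cyclic product $\prod_{z\in\mathbb{Z}/m\mathbb{Z}}\mathrm{Vol}(L_z)/\mathrm{Vol}(M_z)$ equals $1$. Comparing this with the clauses of~\eqref{2} and~\eqref{2!} shows that, for the fixed face $\mathbf{L}$, the two are logically identical. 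Quantifying over all $l$-faces then yields \eqref{2}$\Leftrightarrow$\eqref{2!}, and combining with Theorem~\ref{main} completes the argument.

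The only delicate point, and the step I expect to require the most care, is purely a matter of reconciling conventions: I must verify that the fans $\{(F_z,L_z,M_z)\}$ and the feet $Q_{[t]}$ arising from our $k$-cevian construction agree with the fans and cevian feet of Samet's intrinsic setup once $\mathbf{L}$ is viewed in isolation. This should be immediate from the definitions, since each $c_t$ is by construction the intersection $C_t\cap\mathbf{L}$ and its foot $Q_{[t]}$ already lies in the interior of the facet $\mathbf{L}_t$ of $\mathbf{L}$; thus $\mathfrak{r}_\mathbf{L}$ is intrinsic to $\mathbf{L}$ and retains no dependence on the ambient simplex $\mathbf{S}$. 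Consequently Samet's hypotheses are met without modification, and no new geometric estimate is needed beyond the direct citation of \cite{Samet}.
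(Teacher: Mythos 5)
Your proposal is correct and follows essentially the same route as the paper: the paper's own proof consists of the single observation that condition~\eqref{2} of Theorem~\ref{main} and condition~\eqref{2!} are equivalent by \cite{Samet}, which is exactly your face-by-face invocation of Samet's volume-ratio criterion on each $\mathbf{L}$, combined with Theorem~\ref{main}. Your write-up merely spells out the details (one foot per facet of $\mathbf{L}$, intrinsicness of $\mathfrak{r}_\mathbf{L}$) that the paper leaves implicit.
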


  
\begin{proof}
The second conditions of Theorems~\ref{main} and~\ref{Main!} are equivalent, as shown in \cite{Samet}.
\end{proof}

Finally, let us point out that the results in \cite{Buba-Brzozowa, Witczynski, Samet} as well as the classical Ceva's theorem are special cases of Theorem~\ref{Main!}. More specifically:
\begin{enumerate}
  \item if $n=2$ and $k=1$, then Theorem~\ref{Main!} is just the Ceva's theorem;
  \item if $n=3$ and $k=2$, then Theorem~\ref{Main!} is the Witczyński's theorem (see \cite[Proposition~2]{Witczynski});  
  \item if $n>2$ and $k=n-1$, then Theorem~\ref{Main!} boils down to Buba-Brzozowa's theorem (see \cite[Theorem~1]{Buba-Brzozowa}); 
  \item if $n>2$ and $k=1$, then Theorem~\ref{Main!} reduces to Samet's theorem (see \cite{Samet}).
\end{enumerate}


\section{Future work}
Our prior analysis has focused on families of $k\mbox{-cevians}$ of uniform dimension $k$. We can, however, consider families of cevians with different dimensions and prove a Ceva-type theorem in such cases.

\begin{proposition}\label{prop}
    Let $\Delta P_0P_1P_2P_3$ be a tetrahedron. Let us consider the family 
    \[
    \mathfrak{R}=\{\Delta Q_{\{0,1\}}P_2P_3, \Delta Q_{\{0,2\}}P_1P_3, \Delta Q_{\{0,3\}}P_1P_2, \Delta Q_{\{1,2,3\}}P_0\}
    \]
    and points $Q_{\{1,2\}},Q_{\{1,3\}},Q_{\{2,3\}}$, where $Q_{\{i,j\}}\in \interior{\Delta P_iP_j}$ for $i,j\in \{1,2,3\}\mbox{, }i<j$ and $Q_{\{i,j\}}\preceq Q_{\{1,2,3\}}$. The following conditions are equivalent:
    \begin{enumerate}
        \item \label{1!!} The family $\mathfrak{R}$ intersects nonempty.
        \item \label{2!!} For every $i,j,k\in\{0,1,2,3\}$ such that $0\leq i<j<k\leq 3$ the following holds.
        \[
        \frac{P_iQ_{\{i,j\}}}{Q_{\{i,j\}}P_j}
        \frac{P_jQ_{\{j,k\}}}{Q_{\{j,k\}}P_k}
        \frac{P_kQ_{\{k,i\}}}{Q_{\{k,i\}}P_i}=1
        \]
    \end{enumerate}
\end{proposition}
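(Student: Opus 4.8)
The plan is to run everything through barycentric coordinates with respect to $\Delta P_0P_1P_2P_3$, reducing both implications to the same elementary manipulation that underlies the classical proof of Ceva's theorem. Write a candidate common point as $X=\alpha_0P_0+\alpha_1P_1+\alpha_2P_2+\alpha_3P_3$ with $\alpha_m>0$ and $\sum_m\alpha_m=1$, and abbreviate the edge division ratios by $\rho_{ij}=\frac{P_iQ_{\{i,j\}}}{Q_{\{i,j\}}P_j}$. Since $Q_{\{i,j\}}=\frac{1}{1+\rho_{ij}}P_i+\frac{\rho_{ij}}{1+\rho_{ij}}P_j$, a direct expansion shows that $X$ lies on the $2$-cevian with foot $Q_{\{0,j\}}$ exactly when $\alpha_j/\alpha_0=\rho_{0j}$, for each $j\in\{1,2,3\}$. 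Writing $Q_{\{1,2,3\}}=q_1P_1+q_2P_2+q_3P_3$, membership of $X$ in the $1$-cevian $\Delta Q_{\{1,2,3\}}P_0$ amounts to the single proportion $(\alpha_1:\alpha_2:\alpha_3)=(q_1:q_2:q_3)$. First I would establish this dictionary between cevian membership and coordinate ratios.

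Next I would exploit the hypothesis $Q_{\{i,j\}}\preceq Q_{\{1,2,3\}}$ for $i,j\in\{1,2,3\}$. By the definition of the induced multipede this means the $Q_{\{i,j\}}$ are precisely the feet, on the edges of $\Delta P_1P_2P_3$, of the three concurrent cevians through $Q_{\{1,2,3\}}$; intersecting the line $P_tQ_{\{1,2,3\}}$ with the opposite edge yields $\rho_{12}=q_2/q_1$, $\rho_{13}=q_3/q_1$ and $\rho_{23}=q_3/q_2$. With this I would translate condition~\eqref{2!!}, being careful that a factor $\frac{P_iQ_{\{i,j\}}}{Q_{\{i,j\}}P_j}$ is $\rho_{ij}$ while its reverse is $\rho_{ij}^{-1}$. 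The triple $(1,2,3)$ then reads $\rho_{12}\rho_{23}\rho_{13}^{-1}=1$ and holds automatically, so the real content lies in the three triples containing $0$, namely $\rho_{01}\rho_{12}=\rho_{02}$, $\rho_{01}\rho_{13}=\rho_{03}$ and $\rho_{02}\rho_{23}=\rho_{03}$ (one of which is redundant given the multipede relations).

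The implication $\eqref{1!!}\Rightarrow\eqref{2!!}$ is then immediate: from a common point $X$ one reads $\rho_{0j}=\alpha_j/\alpha_0$ and $(\alpha_1:\alpha_2:\alpha_3)=(q_1:q_2:q_3)$, and substitution gives, e.g., $\rho_{01}\rho_{12}=\frac{\alpha_1}{\alpha_0}\cdot\frac{\alpha_2}{\alpha_1}=\frac{\alpha_2}{\alpha_0}=\rho_{02}$, and likewise for the remaining triples. For the converse I would construct the point: choose any $\alpha_0>0$, set $\alpha_j=\rho_{0j}\alpha_0$ for $j=1,2,3$, and rescale to sum to $1$. All coordinates are positive because every $\rho_{ij}>0$, so $X$ is interior and lies on the three $2$-cevians by construction; the product conditions give $(\alpha_1:\alpha_2:\alpha_3)=(\rho_{01}:\rho_{02}:\rho_{03})=(q_1:q_2:q_3)$, whence $X=\alpha_0P_0+(1-\alpha_0)Q_{\{1,2,3\}}$ also lies on the $1$-cevian. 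Thus $X\in\bigcap\mathfrak{R}$.

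I expect the main obstacle to be bookkeeping rather than mathematics: fixing a consistent orientation for each ratio $\frac{P_iQ_{\{i,j\}}}{Q_{\{i,j\}}P_j}$ so that the cyclic products of condition~\eqref{2!!} line up with the $\rho_{ij}$ carrying the correct exponents, and extracting the relations $\rho_{12}=q_2/q_1$, etc., cleanly from the multipede hypothesis. This last collinearity step is what couples the solitary $1$-cevian to the three edge feet and forces the system to be consistent; once it is in place, the equivalence of the index-$0$ triples with the proportion $(\alpha_1:\alpha_2:\alpha_3)=(q_1:q_2:q_3)$ is routine, and the automatic vanishing of the $(1,2,3)$ triple serves as a reassuring consistency check.
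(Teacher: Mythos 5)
Your proof is correct, and it takes a genuinely different route from the paper's. The paper argues by reduction to the cited theorem of Buba-Brzozowa: using the hypothesis $Q_{\{i,j\}}\preceq Q_{\{1,2,3\}}$ it forms three extra $2$-cevians $C_1=\Delta Q_{\{2,3\}}P_0P_1$, $C_2=\Delta Q_{\{1,3\}}P_0P_2$, $C_3=\Delta Q_{\{1,2\}}P_0P_3$ and replaces the $1$-cevian $\Delta Q_{\{1,2,3\}}P_0$ by them, obtaining a family $\mathfrak{R}'$ of six $2$-cevians with one foot on each edge of the tetrahedron. For \eqref{1!!}$\Rightarrow$\eqref{2!!} it observes that $\Delta Q_{\{1,2,3\}}P_0\subset C_i$ by convexity, so a common point of $\mathfrak{R}$ is a common point of $\mathfrak{R}'$, and Buba-Brzozowa's theorem yields the product conditions; for the converse, that theorem supplies $X\in\bigcap\mathfrak{R}'$, and the point $Y$ where the line $P_0X$ meets the face $\Delta P_1P_2P_3$ lies on all three segments $P_iQ_{\{j,k\}}$, which by the multipede hypothesis concur precisely at $Q_{\{1,2,3\}}$; hence $Y=Q_{\{1,2,3\}}$ and $X$ lies on the $1$-cevian after all. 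Your proof instead does everything by hand in barycentric coordinates and never invokes the three-dimensional Ceva-type theorem: the membership dictionary ($X$ on the cevian with foot $Q_{\{0,j\}}$ iff $\alpha_j=\rho_{0j}\alpha_0$; $X$ on $\Delta Q_{\{1,2,3\}}P_0$ iff $(\alpha_1:\alpha_2:\alpha_3)=(q_1:q_2:q_3)$) together with the multipede relations $\rho_{12}=q_2/q_1$, $\rho_{13}=q_3/q_1$, $\rho_{23}=q_3/q_2$ reduces both implications to one-line algebra. What each approach buys: the paper's is shorter modulo the citation and exhibits the general device of fattening a low-dimensional cevian into higher-dimensional cevians containing it, which is the device one would hope to iterate for the conjectured mixed-dimension generalization; yours is elementary and self-contained, and it isolates exactly where the content sits (the triple $(1,2,3)$ is automatic from the multipede relations, and one of the three triples through $0$ is redundant). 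One detail worth writing out explicitly: a common point of $\mathfrak{R}$ must be interior before the ratios $\alpha_j/\alpha_0$ make sense, but this is automatic, since membership in the three $2$-cevians gives $\alpha_j=\rho_{0j}\alpha_0$ even for boundary points, so $\alpha_0=0$ would force all barycentric coordinates to vanish, contradicting that they sum to $1$.
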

    \begin{proof}
        Define $C_1=\Delta Q_{\{2,3\}}P_0P_1$, $C_2=\Delta Q_{\{1,3\}}P_0P_2$, $C_3=\Delta Q_{\{1,2\}}P_0P_3$ and
        \[
        \mathfrak{R'}=\{C_1,C_2,C_3\}\cup \mathfrak{R} \setminus \Delta Q_{\{1,2,3\}}P_0.
        \]
        Suppose that condition \eqref{1!!} holds. There exists point $X\in\bigcap\mathfrak{R}$, such that
        \[
        X\in \Delta Q_{\{1,2,3\}}P_0\subset C_i
        \]
        for~$i\in\{1,2,3\}$. By \cite[Theorem~1]{Buba-Brzozowa} used for $\mathfrak{R'}$ we obtain~\ref{2!!}.

        Assume that conditions \eqref{2!!} holds. Then, once again, by \cite[Theorem~1]{Buba-Brzozowa} there exists $X\in\bigcap\mathfrak{R'}$. Let us consider line $P_0X$ and point $Y\in P_0X\cap \Delta P_1P_2P_3$. Then $\Delta P_0Y\subset C_i$ for $i\in \{1,2,3\}$, since $C_i$ is convex and contains points $P_0$ and $Y$. This implies that $Y=Q_{\{1,2,3\}}$, thus $X\in \Delta Q_{\{1,2,3\}}P_0.$
    \end{proof}
    

    This result suggests the possibility of a more general theorem establishing conditions for the non-emptiness of a family of cevians of varying dimensions. In such a theorem, our main result, Theorem~\ref{Main!}, would appear as a special case.

{\bf Acknowledgement.} The author expresses sincere gratitude to Professor Przemysław Koprowski for the valuable discussions, comments, and for initially hypothesizing the concept that subsequently developed into Theorem \eqref{Main!}.

\bibliographystyle{amsplain}

\end{document}